\newtheorem{theorem}{Theorem}[section]
\newtheorem{proposition}[theorem]{Proposition}
\newtheorem{corollary}[theorem]{Corollary}
\newtheorem{remark}[theorem]{Remark}
\newcommand{\uu}{{\bf u}}
\newcommand{\vv}{{\bf v}}
\newcommand{\xx}{{\bf x}}
\newcommand{\VV}{{\bf V}}
\newcommand{\cE}{{\cal E}}
\newcommand{\cL}{{\cal L}}
\newcommand{\real}{\mathbb{R}}
\def\QED{\mbox{}\hfill$\Box$}
\newcommand{\inttwo}{{\int_{\real^2}}}
\title{Generalized Helmholtz-Kirchhoff model for two dimensional
distributed vortex motion}
\author{

Raymond Nagem$^{1}$ \and Guido Sandri$^{1}$
 \and David Uminsky$^{2,3}$\and C. Eugene Wayne$^{2,3}$}
\begin{document}
\maketitle
\footnotetext[1]{Department of Aerospace and Mechanical Engineering, Boston University}
\footnotetext[2]{Dept. of Mathematics and Statistics and Center for Biodynamics,
 Boston
University}
\addtocounter{footnote}{2}
\footnotetext[3]{Supported in part by NSF grant DMS-0405724}
\addtocounter{footnote}{3}

{
\begin{abstract}
The two-dimensional Navier-Stokes equations are rewritten as a system
of coupled nonlinear ordinary differential equations.  These equations
describe the evolution of the moments of an expansion of the vorticity
with respect to Hermite functions and of the centers of
vorticity concentrations.  We prove the convergence
of this expansion and show that in the zero viscosity and zero core
size limit we
formally recover the Helmholtz-Kirchhoff model for the evolution of
point-vortices.  The present expansion systematically
incorporates the effects of both viscosity and finite vortex
core size.  We also show that a low-order truncation of our
expansion leads to the representation of the flow as a system of
interacting Gaussian (i.e. Oseen) vortices which previous experimental
work has shown to be an accurate approximation to many important
physical flows \cite{Meunier:2005}.
\end{abstract}
}

\section{Introduction}\label{sec:one}

{
In this paper we represent solutions of the two-dimensional 
Navier-Stokes equations as a system of interacting vortices.
This expansion, which generalizes the Helmholtz-Kirchhoff model
of interacting point vortices in an inviscid fluid, systematically
incorporates the effects of both vorticity and finite vortex
core size.  Furthermore, we give conditions which guarantee
the convergence of our expansion.
}
Incompressible viscous flow has two standard analytic
representations: a formulation in terms of the primitive velocity 
and pressure variables, and a formulation in terms of the velocity 
and vorticity variables \cite{majda:2002}.  The velocity-vorticity representation has
particular advantages when boundaries are unimportant,
since vorticity cannot be created or destroyed in the interior
of a fluid.  The vorticity field can also be directly related
to physically observed flow structures such as line and 
ring vortices.
   
In two space dimensions, the vorticity field has the additional
advantage of reducing to a scalar.  An early representation
of two-dimensional flow in terms of moving point vortices
was developed by Helmholtz-Kirchhoff \cite{Kirchhoff:1876} { and by Helmholtz \cite{Helmholtz:1858} }. {The point 
vortex model has been studied extensively - a thorough review of the model and} recent developments are 
described in the monograph by Newton \cite{Newton:2001}.  While the 
Helmholtz-Kirchhoff point vortex model captures many of the basic physical 
phenomena observed in two-dimensional rotational flows, experiments 
with even relatively simple vortex configurations exhibit complications 
far beyond the point vortex predictions \cite{Meunier:2005}. 
{Additionally, the classical point vortex model neglects the effects of {\it viscosity}.}  
{
However, these experiments also reinforce the idea that in many
circumstances the fluid flow may be well approximated by a collection of interacting
vortices - albeit vortices  with finite core size,
subject to the effects of viscosity. { A few recent studies of the interaction of viscous vortices can be found in \cite{CerretelliWilliamson:2003, Dizes:1999,DizeVerga:2002,Melander:2006,Meunier:2005,ting:1991}. The main focus of these papers is the merger of two like signed vortices. In \cite{Melander:2006} the authors use a spatial moment model for 2-D Euler equations which later incorporates weak Newtonian viscosity to derive equations of motions for two like signed vortices. A metastable state is found before merger which consists of two rotating, near-circular, vortices.  More recently, it has been conjectured that in a two-vortex system the profiles relax to a pair of gaussian vortices before merging \cite{DizeVerga:2002} .    Thus, it is of interest to extend and generalize the 
Helmholtz-Kirchhoff point vortex model to a model that incorporates non-zero vortex core size and viscous effects while retaining its basic form.}  Such an extension is the goal of this paper.
}

The governing equations for the velocity $({\bf u})$ and 
pressure $(p)$ variables are
\begin{equation}\label{fluid1}
\frac{\partial {\bf u}}{\partial t} + ({\bf u} \cdot \nabla){\bf u} = 
- \nabla \frac{p}{\rho} + \nu \Delta {\bf u},
\end{equation}
\begin{equation}\label{fluid2}
\nabla \cdot {\bf u} = 0,
\end{equation}
where $\rho$ is the fluid density and $\nu$ is the kinematic
viscosity.  Taking the curl of (\ref{fluid1}) and (\ref{fluid2}) 
gives
\begin{equation}\label{fluids4}
\frac{\partial {\bm \omega}}{\partial t} +
({\bf u} \cdot \nabla) {\bm \omega} -
({\bm \omega} \cdot \nabla) {\bf u} = \nu \Delta {\bm \omega},
\end{equation}
\begin{equation}\label{fluids5}
{\bm \omega} = \nabla \times {\bf u}, \; \; \;
\nabla \cdot {\bm \omega} = 0,
\end{equation}
which are the governing equations for the velocity-vorticity
variables.  For two-dimensional flows, the vorticity vector 
${\bm \omega}$ is perpendicular to the plane of the flow, and
the third term on the left-hand side of (\ref{fluids4}) vanishes.
The condition $\nabla \cdot {\bm \omega} = 0$ is identically
satisfied, and equation (\ref{fluids4}) then reduces to the
single scalar equation:
\begin{equation}\label{fluids6}
\frac{\partial  \omega}{\partial t} +
({\bf u} \cdot \nabla) \omega = \nu \Delta \omega,
\end{equation}
where $\omega$ is the single, non-zero component of the vorticity.
A drawback of the formulation in \eqref{fluids6} is that the velocity
of the fluid is still present in the equation.
However, assuming that the vorticity field is sufficiently localized,
the velocity vector can be computed in terms of the 
vorticity $\omega$
by the Biot-Savart law
\begin{equation}\label{fluids7}
{\bf u}(x) = \frac{1}{2\pi} \inttwo \frac{(x-y)^{\perp}}{|x-y|^2} \omega(y) dy\ ,
\end{equation}
where for a two-vector $z = (z_1,z_2)$, $z^{\perp} = (-z_2,z_1)$.

In this paper we use equations (\ref{fluids6}) and 
(\ref{fluids7}) to develop a vorticity representation of 
two-dimensional viscous flow.   Our representation 
is based on a decomposition of the vorticity field into
a set of moving distributed vortices.  Differential equations 
are derived for the motion of the vortex centers and for
the time evolution of the vortex distributions.  
{
The evolution of each individual vortex is represented as an expansion with respect to
a sequence of Hermite functions.  Such expansions have proven useful in theoretical
studies of two-dimensional fluid flows (\cite{GallayWayne:2002}, \cite{GallayWayne:2005}) and 
the leading order term in this expansion is precisely the Gaussian vortex 
(i.e. Oseen vortex \cite{oseen:1927},\cite{oseen:1911}) whose
utility as an approximation for vortex interaction was shown in \cite{Meunier:2005}.
We show that the coefficients in this expansion satisfy a system of ordinary differential
equations whose coefficients can be explicitly represented in terms of a fixed,
computable kernel function.  We also prove the convergence of this expansion.
}
It is shown that our representation reduces in the appropriate 
limit to the Helmholtz-Kirchhoff model, and allows at the same time
arbitrarily complex evolution and interaction of the 
moving vortices.   

In the present paper we concentrate on the mathematical formulation of 
the generalized Helmholtz-Kirchhoff model.  In future work we will explore the
predictions of this model both numerically and analytically in a number of
different physical settings.

\section{The ``multi-vortex'' expansion}\label{sec:two}

In this section we separate the solution of the vorticity equation into $N$  components and derive separate evolution equations for
each component.  From a physical point of view this decomposition will be most useful 
when each of the  components corresponds
to a localized region of vorticity (e.g. a vortex) well separated from the other lumps but the mathematical development
described below is well defined {without} regard to these physical considerations.  However,
with this application in mind, we will often refer to each of the components as a
``vortex''.

Consider the initial value problem for the two-dimensional vorticity equation
\begin{eqnarray}\label{eq:2DIVP}\nonumber
&& \frac{\partial \omega}{\partial t} = \nu \Delta \omega - \uu \cdot \nabla \omega ,\\
&& \omega = \omega(x,t) ,\ x \in \real^2 ,\ t  > 0 \\ \nonumber
&& \omega(x,0) = \omega_0(x)
\end{eqnarray}
where $\uu$ is the velocity field associated to the vorticity field $\omega$.  We begin by decomposing the initial vorticity
distribution by writing
\begin{equation}\label{eq:initial_decomposition}
\omega_0(x) = \sum_{j=1}^N \omega^j_0(x)\ .
\end{equation}
Of course this decomposition is not unique - even the number of pieces, $N$, into which we decompose the
vorticity is up to us to choose.  In general the choice we make will be motivated by physical considerations,
however,  for the development below,
all we require of the decomposition is that the total vorticity of each vortex is non-zero, i.e.
\begin{equation}\label{eq:mass}
m_j = \int_{\real^2}  \omega^j_0(x) dx \ne 0 ,\ j=1, \dots ,N .
\end{equation}
If \eqref{eq:mass} is satisfied we define $x^j_0$ by
\begin{equation}\label{eq:COMIC}
\int_{\real^2} (x-x^j_0) \omega^j_0(x) dx = 0\ ,
\end{equation}
or equivalently
\begin{equation}\label{eq:COMIC2}
x^j_0 = \frac{1}{m_j} \inttwo x \omega^j_0(x) dx\ .
\end{equation}

We now write the vorticity for $t>0$ as
\begin{equation}\label{eq:vorticity_decomposition}
\omega(x,t) = \sum_{j=1}^N \omega^j(x-x^j(t);t)
\end{equation}
and the velocity field as
\begin{equation}\label{eq:velocity_decomposition}
\uu(x,t) = \sum_{j=1}^N \uu^j(x-x^j(t);t)
\end{equation}
where $\uu^j(y,t)$ is the velocity field associated to $\omega^j(y,t)$ by the Biot-Savart Law.
Of course we still have to define the equations of motion for $\omega^j(y,t)$ and $x^j(t)$.

The centers of the vorticity  regions, $x^j(t)$, and the vorticity regions themselves
evolve via a coupled system of ordinary-partial differential equations constructed
so that in the limit of zero viscosity and when the different components of the
vorticity happen to
be point vortices (i.e. Dirac-delta functions) we recover the {Helmholtz-Kirchhoff} point
vortex equations.  If we take the partial derivative of \eqref{eq:vorticity_decomposition}
and use the equation satisfied by the vorticity, we find:
\begin{eqnarray}\label{eq:allomega}
\partial_t \omega(x,t)&=& \sum_{j=1}^N \partial_t \omega^j(s-s^j(t),t) -
\sum_{j=1}^N \dot{x}^j(t) \cdot \nabla \omega^j(x-x^j(t),t) \\ \nonumber
&=& \sum_{j=1}^N \nu \Delta \omega^j(x-x^j(t),t) - \sum_{j=1}^N \left(\sum_{\ell=1}^N 
\uu^{\ell}(x-x^{\ell}(t),t) \right) \cdot \nabla \omega^j(x-x^j(t),t)\ .
\end{eqnarray}

{Given this equation  it is natural to define $\omega^j$ as the solution of  the equation}:
\begin{eqnarray}\label{eq:omegaj}\nonumber
\frac{\partial \omega^j}{\partial t}(x-x^j(t),t) &=& \nu \Delta \omega^j(x-x^j(t),t) 
-  \left(\sum_{\ell=1}^N 
\uu^{\ell}(x-x^{\ell}(t),t) \right) \cdot \nabla \omega^j(x-x^j(t),t) \\ 
&& \qquad +  \dot{x}^j(t) \cdot \nabla \omega^j(x-x^j(t),t) 
\ ,\ j=1, \dots , N.
\end{eqnarray}

To close this system of equations we must specify how the centers of vorticity
$x^j(t)$ evolve.  We impose the condition that the first moment of each
vorticity region must vanish at every time $t > 0$, {i.e.} we require that
\begin{equation}\label{eq:xjdef}
\int_{\real^2}(x-x^j(t))  \omega^j(x-x^j(t),t) dx = 0 ~~{\rm for~all} ~~t>0, \  j = 1, \dots N\ .
\end{equation}
(Note that this equation really contains two conditions - one for each component
of $(x-x^{\ell}(t))$.)
{We impose this condition to fix  the evolution of $x^j(t) $ because Gallay and Wayne have recently shown \cite{GallayWayne:2002} that if one considers the
evolution of general solutions of \eqref{eq:2DIVP}  the solution will approach an 
Oseen vortex, and the rate of the approach will be faster if the vorticity distribution
has first moment equal to zero.}   Solutions of \eqref{eq:2DIVP} preserve the
first moment, and hence if the initial conditions have first moment equal
to zero the solution will have first moment zero for all time.  The equations
\eqref{eq:omegaj} no longer preserve the first moment and thus we impose
this condition for all time, which then defines the motion of the center of vorticity.

Note that if we change variables in \eqref{eq:xjdef} to $z=x-x^j(t)$, we find
\begin{equation}
\int_{\real^2} z \omega^j(z,t) dz = 0\ .
\end{equation}
Since this equation holds for all $t > 0$ we can differentiate both sides with
respect to $t$ to obtain
\begin{equation}
\int_{\real^2} z \partial_t  \omega^j(z,t) dz = 0\ .
\end{equation}

Using \eqref{eq:omegaj} we can insert the formula for $\partial_t \omega^j$ into
this integral and we obtain:
\begin{eqnarray}\label{eq:xjdotintermediate} \nonumber
&& \nu \inttwo z \Delta \omega^j(z,t) dz - \inttwo z \left(\sum_{\ell=1}^N 
\uu^{\ell}(z + x^j(t) -x^{\ell}(t),t) \right) \cdot \nabla \omega^j(z,t) dz \\
&& \qquad \qquad + \inttwo z \left(  \dot{x}^j(t) \cdot \nabla \omega^j(z,t) \right) dz = 0 \ . 
\end{eqnarray}
We first note that if we integrate twice by parts, we have
\begin{equation}\label{eq:ibp}
\inttwo z \Delta \omega^j(z,t) dz = 0\ .
\end{equation}
Next if we take the $n^{{\rm th}}$ component and integrate by parts we find
\begin{equation}\label{eq:ibpagain}
\inttwo z \left(  \dot{x}^j_n (t) \cdot \nabla \omega^j(z,t) \right) dz
= \dot{x}^j_n(t) \inttwo z \partial_{z_n} \omega^j(z,t) dz =  - m_j \dot{x}^j_n(t)\ ,
\end{equation}
where $m_j = \inttwo \omega^j(z,t) dz$ and $n=1,2$.
\begin{remark} Note that the equations \eqref{eq:omegaj} do preserve the total
integral (``mass'') of the solution so this definition of $m_j$ is consistent with
\eqref{eq:mass}.
\end{remark}

Finally, recalling that the velocity field is incompressible we can rewrite
$$
\left(\sum_{\ell=1}^N 
\uu^{\ell}(z + x^j(t) -x^{\ell}(t),t) \right) \cdot \nabla \omega^j(z,t) 
= \nabla \cdot \left(\sum_{\ell=1}^N 
\uu^{\ell}(z + x^j(t) -x^{\ell}(t),t)  \omega^j(z,t) \right)
$$
so, again considering the $n^{{\rm th}}$ component and integrating by parts we have
\begin{eqnarray}\label{eq:ibponcemore}
&& \inttwo z_n \left(\sum_{\ell=1}^N 
\uu^{\ell}(z + x^j(t) -x^{\ell}(t),t) \right) \cdot \nabla \omega^j(z,t) dz \\ \nonumber 
&& \qquad \qquad \qquad  =
- \inttwo  \left(\sum_{\ell=1}^N 
\uu^{\ell}_n (z + x^j(t) -x^{\ell}(t),t) \right) \omega^j(z,t) dz
\end{eqnarray}

Thus, if we combine \eqref{eq:ibp} \eqref{eq:ibpagain}, and \eqref{eq:ibponcemore}
we see that \eqref{eq:xjdotintermediate} reduces to the system of ordinary differential
equations for the centers of the vorticity distributions:
\begin{equation}\label{eq:xjdot}
\frac{d x^j_n}{dt}(t) = \frac{1}{m_j} \sum_{\ell=1}^N \inttwo  \left(
\uu^{\ell}_n (z + x^j(t) -x^{\ell}(t),t) \omega^j(z,t)  \right) dz\ ,
\end{equation}
supplemented by the initial conditions \eqref{eq:COMIC}, while the
$N$ components of the vorticity evolve according to the partial differential equations \eqref{eq:omegaj}
with initial conditions 
\begin{equation}\label{eq:omegajIC}
\omega^j(z,0) = \omega^j_0(z+x^j_0)
\end{equation}
obtained by combining \eqref{eq:initial_decomposition} and \eqref{eq:vorticity_decomposition}.

\begin{remark} Consider \eqref{eq:xjdot} in the  limit in which the components $\omega^j$
are all point vortices, i.e. $\omega^j(z,t) = m_j \delta(z)$, with $\delta(z)$ the {Dirac-delta} function.  Recall that the velocity field associated with such a point vortex is
$$
U_n(z_1,z_2,t) = -\sum_{j=1}^2 \epsilon_{n,j} z_j \frac{1}{(z_1^2+z_2^2)}
$$
where $\epsilon_{m,j}$ is the antisymmetric tensor with two indices.
Then if we ignore the (singular) term with $\ell =j$ in the sum on the right hand
side of \eqref{eq:xjdot} we find 
\begin{equation}\label{eq:Helmholtz}
\frac{d x^j_n}{dt}(t) =  \sum_{\ell=1; \ell\ne j}^N m_{\ell} \frac{\left( \sum_{k=1}^2 \epsilon_{n,k}(x^j(t) -x^{\ell}(t))_k 
\right) }{|x^j(t) -x^{\ell}(t)|^2}
\end{equation}
These of course are just {Helmholtz-Kirchhoff} equations for the inviscid motion of a system of 
point vortices.  Thus, our expansion can be regarded as a generalization of 
the this
approximation which allows for both nonzero viscosity and vortices of finite size.
To justify omitting the term with $\ell = j$ on the
right hand side of \eqref{eq:Helmholtz} we note that if we approximate the delta
function with a narrow, Gaussian vorticity distribution, and $\uu^j$ by the 
corresponding velocity field, this term will vanish by symmetry.
\end{remark}

\section{The moment expansion; case of a single center}\label{sec:three}

In this section we introduce another idea - an expansion of the vorticity in
terms of Hermite functions.  Then, in the next section we will combine the 
Hermite expansion with the multi-vortex expansion of the previous section.

The moment expansion is an expansion of the solution of the vorticity
equation in terms of Hermite functions.  Define
\begin{equation}\label{eq:phi00}
\phi_{00}(x,t;\lambda) = \frac{1}{ \pi \lambda^2 } e^{-|x|^2/\lambda^2}
\end{equation}
where $\lambda^2 = \lambda_0^2 + 4 \nu t$.  Three simple facts that we will use
repeatedly are
\begin{enumerate}
\item[(i)] $ \partial_t \phi_{00} = \nu \Delta \phi_{00}$
\item[(ii)] $\inttwo \phi_{00}(x,t;\lambda) dx = 1\ \ {\rm for ~ all}\ \ t \ge 0\ .$
\item[(iii)] Finally, and crucially for what follows, the vorticity function
$\omega(x,t) = \alpha \phi_{00}(x,t)$ is an exact solution (called the Oseen,
or Lamb, vortex) of the two dimensional vorticity equation for all values of $\alpha$.
\end{enumerate}
Note that we will often supress the dependence of $\phi_{00}$ on $\lambda$ when
there is no fear of confusion.

We now define the Hermite functions of order $(k_1,k_2)$ by
\begin{equation}\label{eq:Hermitedef}
\phi_{k_1,k_2}(x,t;\lambda) = D_{x_1}^{k_1} D_{x_2}^{k_2} \phi_{00}(x,t;\lambda)
\end{equation}
and the corresponding moment expansion of a function by
\begin{equation}\label{eq:moment_expansion_def}
\omega(x,t) = \sum_{k_1, k_2=1}^{\infty} M[k_1,k_2;t] \phi_{k_1,k_2}(x,t;\lambda) 
\end{equation}

Note that if the function $\omega(x,t)$ in \eqref{eq:moment_expansion_def} is the
vorticity field of some fluid the linearity of the Biot-Savart law implies that we
can expand the associated velocity field as:
\begin{equation}\label{eq:velocity_field_expansion}
\VV(x,t) =  \sum_{k_1, k_2=1}^{\infty} M[k_1,k_2;t] \VV_{k_1,k_2}(x,t;\lambda) 
\end{equation}
where
\begin{equation}\label{eq:velocity_moments}
\VV_{k_1,k_2}(x,t;\lambda) = D_{x_1}^{k_1} D_{x_2}^{k_2} \VV_{00}(x,t;\lambda)
\end{equation}
and $\VV_{00}(x,t;\lambda)$ is the velocity field associated with the Gaussian
vorticity distribution $\phi_{00}$ - explicitly we have:
\begin{equation}\label{eq:Vzerodef}
\VV_{00}(x,t;\lambda) =  \frac{1}{2\pi}\frac{(-x_2,x_1)}{|x|^2}
  \Bigl(1 -  e^{-|x|^2/\lambda^2}\Bigr)~,
\end{equation}

We define the Hermite polynomials via their generating function:
\begin{equation}\label{eq:Hermitepoly}
H_{n_1,n_2}(z;\lambda) =  \left( (D^{n_1}_{t_1} D^{n_2}_{t_2}
e^{\left( \frac{2 t\cdot z -  t^2}{\lambda^2}\right)} \right)|_{t=0}
\end{equation}
Note that the ``standard'' Hermite polynomials correspond to taking $\lambda=1$.

Then using the standard orthogonality relationship for the Hermite polynomials:
\begin{equation}
\inttwo H_{n_1,n_2}(z;\lambda=1) H_{m_1,m_2}(z;\lambda=1) e^{-z^2} dz
= \pi 2^{n_1+n_2} (n_1!) (n_2!) \delta_{n1,m1} \delta_{n_2,m_2}
\end{equation}
we see that the coefficients in the expansion \eqref{eq:moment_expansion_def} are
defined by the projection operators:
\begin{equation}\label{eq:coefficient}
 M[k_1,k_2;t]  = (P_{k_1,k_2} \omega)(t) = \frac{ (-1)^{k_1+k_2} \lambda^{2(k_1+k_2)} }{2^{k_1+k_2} (k_1!) (k_2!) }
 \inttwo H_{k_1,k_2}(z;\lambda) \omega(z,t) dz
\end{equation}

\subsection{Convergence of the moment expansion}\label{subsec:onevortexconvergence}

In this subsection we derive a criterion for the convergence of the moment expansion
derived above and we show that if this criterion is satisfied for $t=0$ then it is satisfied
for all subsequent times $t>0$.

Our convergence criterion is based on the observation that the Hermite
functions $\phi_{k_1,k_2}(x,t;\lambda) $ are, for any value of $t$, the eigenfunctions
of the linear operator
\begin{equation}\label{eq:Llambda_def}
\cL^{\lambda} \psi = \frac{1}{4} \lambda^2 \Delta \psi + \frac{1}{2} \nabla \cdot (x \psi)\ .
\end{equation}
This fact can be verified by direct computation and is related to the fact that
$\cL^{\lambda}$ can be transformed into the Hamiltonian quantum mechanical harmonic
oscillator.

The Gaussian function $\phi_{00}$ plays a crucial role in the convergence
proof, and its dependence {on} the parameter $\lambda$ is particularly important in this
discussion, so for this subsection only we will define
$$
\phi_{00}(x,t;\lambda) = \Phi_{\lambda}(x,t)
$$
to empasize this dependence.

If one now proceeds as in Lemma 4.7 of \cite{GallayWayne:2005} one can prove
\begin{proposition}\label{prop:selfadjoint} 
The operator $\cL^{\lambda}$ is self-adjoint in the Hilbert space
$$
X^{\lambda} = \{ f \in L^2(\real^2) ~|~ \Phi_{\lambda}^{-1/2} f \in L^2(\real^2) \}
$$
with innerproduct $(f,g)_{\lambda} = \int_{\real^2} \Phi_{\lambda}^{-1} \overline{f} g dx$.
\end{proposition}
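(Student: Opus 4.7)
The plan is to reduce the question to self-adjointness of a symmetric Ornstein--Uhlenbeck--type operator in divergence form on a weighted $L^2$ space, following the strategy of Lemma 4.7 of \cite{GallayWayne:2005}. The first step is the algebraic observation that $\cL^\lambda \Phi_\lambda = 0$: the two terms $\frac{\lambda^2}{4}\Delta\Phi_\lambda$ and $\frac{1}{2}\nabla\cdot(x\Phi_\lambda)$ cancel exactly, as is checked by direct computation. This permits a ``ground state'' factorization: expanding $\cL^\lambda(\Phi_\lambda g)$ by the product rule, the term without derivatives of $g$ is $(\cL^\lambda\Phi_\lambda)g=0$, and after using $\nabla\Phi_\lambda = -(2x/\lambda^2)\Phi_\lambda$ the remaining pieces organize into
$$
\cL^\lambda(\Phi_\lambda g) \;=\; \Phi_\lambda \,\tilde\cL g, \qquad \tilde\cL g \;:=\; \tfrac{\lambda^2}{4}\Phi_\lambda^{-1}\nabla\cdot\bigl(\Phi_\lambda \nabla g\bigr),
$$
which is manifestly in weighted divergence form.

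Next I would observe that the multiplication map $U:L^2(\real^2,\Phi_\lambda\,dx)\to X^\lambda$ given by $g\mapsto \Phi_\lambda g$ is an isometric isomorphism, since $\int \Phi_\lambda^{-1}|\Phi_\lambda g|^2\,dx = \int\Phi_\lambda|g|^2\,dx$. Under $U$, the question becomes whether $\tilde\cL$ is self-adjoint on $L^2(\Phi_\lambda\,dx)$. Symmetry of $\tilde\cL$ on $C_c^\infty(\real^2)$ is immediate from one integration by parts:
$$
(\tilde\cL g_1,g_2)_{\Phi_\lambda} \;=\; -\tfrac{\lambda^2}{4}\inttwo \Phi_\lambda\,\nabla\bar g_1\cdot\nabla g_2\,dx,
$$
which is visibly symmetric in $g_1,g_2$.

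To promote symmetry to self-adjointness I would invoke the standard Friedrichs / quadratic-form construction: the non-negative, densely defined form $Q(g)=\tfrac{\lambda^2}{4}\int\Phi_\lambda|\nabla g|^2\,dx$ is closable on $L^2(\Phi_\lambda\,dx)$, and the representation theorem provides a unique self-adjoint operator associated with its closure that agrees with $-\tilde\cL$ on smooth functions. Transferring back along the isometry $U$ yields the self-adjoint realization of $\cL^\lambda$ on $X^\lambda$.

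The main technical obstacle is confirming that the abstract Friedrichs operator really does coincide with the differential expression $\cL^\lambda$ on the domain one cares about, rather than some spurious proper extension. The cleanest remedy---and presumably the one used in the referenced lemma---is to exhibit an explicit core: the Hermite functions $\phi_{k_1,k_2}$ are eigenfunctions of $\cL^\lambda$ with real eigenvalues $-\tfrac12(k_1+k_2)$ (verifiable from the commutator $[\cL^\lambda,\partial_{x_i}]=-\tfrac12\partial_{x_i}$ together with $\cL^\lambda\phi_{00}=0$), are mutually orthogonal in $X^\lambda$ under $(\cdot,\cdot)_\lambda$, and via $U$ correspond to weighted Hermite polynomials whose completeness in $L^2(\Phi_\lambda\,dx)$ is classical. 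Once completeness is in hand, $\cL^\lambda$ is unitarily equivalent to a real diagonal multiplication operator on $\ell^2$, so self-adjointness on its obvious maximal domain is automatic and in particular agrees with the Friedrichs extension above.
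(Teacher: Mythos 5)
Your argument is correct, but it follows a different route from the one the paper relies on. The paper gives no self-contained proof: it simply invokes Lemma 4.7 of \cite{GallayWayne:2005}, and the mechanism there (which the paper itself hints at in the sentence about the quantum harmonic oscillator) is conjugation by $\Phi_{\lambda}^{1/2}$, i.e. one checks that $\Phi_{\lambda}^{-1/2}\cL^{\lambda}\Phi_{\lambda}^{1/2}$ is, up to constants, the Schr\"odinger operator $\frac{\lambda^2}{4}\Delta - \frac{|x|^2}{4\lambda^2} + \frac{1}{2}$ acting on the flat $L^2(\real^2)$, whose (essential) self-adjointness is classical; the map $f\mapsto \Phi_{\lambda}^{-1/2}f$ is exactly the unitary from $X^{\lambda}$ to $L^2$ built into the definition of $X^{\lambda}$, so self-adjointness transfers back immediately. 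You instead conjugate by the full ground state $\Phi_{\lambda}$, landing on the divergence-form Ornstein--Uhlenbeck operator $\tilde\cL g = \frac{\lambda^2}{4}\Phi_{\lambda}^{-1}\nabla\cdot(\Phi_{\lambda}\nabla g)$ on $L^2(\Phi_{\lambda}\,dx)$, take its Friedrichs extension, and then identify that extension with the maximal (diagonal) operator by exhibiting the Hermite eigenbasis and using completeness of Hermite polynomials in the Gaussian-weighted space. All the computations you rely on check out: $\cL^{\lambda}\Phi_{\lambda}=0$, the factorization $\cL^{\lambda}(\Phi_{\lambda}g)=\Phi_{\lambda}\tilde\cL g$, the commutator $[\cL^{\lambda},\partial_{x_i}]=-\tfrac12\partial_{x_i}$ giving eigenvalues $-\tfrac12(k_1+k_2)$, and the symmetry identity after one integration by parts. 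What your approach buys is self-containedness and the fact that it produces the spectral decomposition (eigenvalues and the completeness statement of Corollary \ref{corr:self_adjoint}) as part of the proof rather than as a downstream corollary; what the paper's approach buys is brevity, since everything is outsourced to the textbook self-adjointness of the harmonic oscillator. The only points you wave at rather than prove are the closability/representation step for the form and the density of polynomials in $L^2(\Phi_{\lambda}\,dx)$; both are standard, but if you want the argument fully rigorous you should cite them explicitly (e.g. Reed--Simon for the Friedrichs construction and for completeness of Hermite functions).
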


An immediate corollary of this proposition and the general theory of self-adjoint operators is
\begin{corollary}\label{corr:self_adjoint}
The eigenfunctions of $\cL^{\lambda}$ form a complete orthogonal set 
in the Hilbert space $X^{\lambda}$.
\end{corollary}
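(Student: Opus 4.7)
The strategy is to combine Proposition \ref{prop:selfadjoint} with a reduction to the quantum harmonic oscillator. By self-adjointness of $\cL^{\lambda}$ on $X^{\lambda}$, eigenfunctions for distinct eigenvalues are automatically orthogonal with respect to $(\cdot,\cdot)_{\lambda}$, so the orthogonality half of the corollary is essentially free. The real content is \emph{completeness}: a general self-adjoint operator can have purely continuous spectrum and no eigenfunctions at all, so we must exploit additional structure of $\cL^{\lambda}$. The extra ingredient is the fact already flagged in the paper, namely that $\cL^{\lambda}$ is unitarily equivalent to a Schr\"odinger-type harmonic oscillator, whose spectrum is well known to be purely discrete with Hermite eigenvectors.

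Concretely, I would define $U: X^{\lambda} \to L^2(\real^2)$ by $(Uf)(x) = \Phi_{\lambda}(x)^{-1/2} f(x)$. The identity
\begin{equation*}
\|Uf\|_{L^2}^2 = \inttwo \Phi_{\lambda}^{-1}|f|^2\, dx = \|f\|_{X^{\lambda}}^2
\end{equation*}
shows that $U$ is a unitary isomorphism. A direct computation using $\nabla \Phi_{\lambda}^{1/2} = -(x/\lambda^2)\Phi_{\lambda}^{1/2}$ then yields
\begin{equation*}
\tilde{\cL} := U\,\cL^{\lambda}\,U^{-1} = \frac{\lambda^2}{4}\Delta - \frac{|x|^2}{4\lambda^2} + \frac{1}{2},
\end{equation*}
which is, up to a constant shift, precisely the two-dimensional quantum harmonic oscillator on $L^2(\real^2)$. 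This operator has purely discrete spectrum, and its normalized eigenfunctions $H_{k_1}(x_1/\lambda)H_{k_2}(x_2/\lambda)\,e^{-|x|^2/(2\lambda^2)}$ form a complete orthonormal basis of $L^2(\real^2)$, a standard result from quantum mechanics.

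Pulling this basis back through $U^{-1}$ (multiplying by $\Phi_{\lambda}^{1/2}$) produces, up to the normalizing constants already tracked in \eqref{eq:coefficient}, exactly the Hermite functions $\phi_{k_1,k_2}$ of \eqref{eq:Hermitedef}. Since a unitary map carries a complete orthonormal basis to a complete orthonormal basis, the $\phi_{k_1,k_2}$ form a complete orthogonal set in $X^{\lambda}$, as claimed. The main obstacle is the algebraic bookkeeping in the conjugation step and in matching the pulled-back basis to the $\phi_{k_1,k_2}$ with the correct normalization; the genuinely delicate analytic issue, namely essential self-adjointness on a suitable dense core, is already subsumed by Proposition \ref{prop:selfadjoint}, where the serious work lies.
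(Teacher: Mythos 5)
Your proof is correct and takes essentially the route the paper intends: the paper derives the corollary from Proposition \ref{prop:selfadjoint} together with its earlier remark that $\cL^{\lambda}$ ``can be transformed into the Hamiltonian quantum mechanical harmonic oscillator,'' and your explicit conjugation $U\,\cL^{\lambda}\,U^{-1} = \frac{\lambda^2}{4}\Delta - \frac{|x|^2}{4\lambda^2} + \frac{1}{2}$ with $U f = \Phi_{\lambda}^{-1/2}f$ is exactly that transformation, correctly computed. You also rightly flag (and repair) the one point the paper's appeal to ``the general theory of self-adjoint operators'' glosses over: self-adjointness alone yields orthogonality but not completeness of eigenfunctions, so the reduction to an operator with purely discrete spectrum and a known complete Hermite eigenbasis is genuinely needed.
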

\noindent
and as a corollary of this result  and the observation that the eigenfunctions
of $\cL^{\lambda}$ are precisely our Hermite functions $\{ \phi_{k_1,k_2} \}$,
we have finally
\begin{proposition}\label{prop:convergence_criterion}
Suppose that {
$$
\| f \|_{\lambda}^2 = \int_{\real^2} \Phi_{\lambda}^{-1}(x) |f(x) |^2 dx < \infty\ ,
$$
then the expansion}
$$
f(x) = \sum_{k_1,k_2} M[k_1,k_2] \phi_{k_1,k_2}(x)
$$
converges with respect to the norm on the Hilbert space $X^{\lambda}$.
\end{proposition}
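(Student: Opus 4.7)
The plan is to observe that the hypothesis is precisely the statement that $f \in X^{\lambda}$ (since $\| f \|_{\lambda}^2 = \int \Phi_{\lambda}^{-1} |f|^2 dx$ is the squared norm in the Hilbert space introduced in Proposition \ref{prop:selfadjoint}), and then invoke Corollary \ref{corr:self_adjoint} directly. The corollary says that the Hermite functions $\{ \phi_{k_1,k_2} \}$, being the eigenfunctions of the self-adjoint operator $\cL^{\lambda}$, form a complete orthogonal set in $X^{\lambda}$. General Hilbert space theory then guarantees that every $f \in X^{\lambda}$ admits a Fourier expansion
$$
f = \sum_{k_1,k_2} \frac{(f,\phi_{k_1,k_2})_{\lambda}}{(\phi_{k_1,k_2},\phi_{k_1,k_2})_{\lambda}} \phi_{k_1,k_2}
$$
which converges in the norm of $X^{\lambda}$.

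The only remaining task is to verify that the abstract Fourier coefficients above coincide with the coefficients $M[k_1,k_2]$ defined by the projection formula \eqref{eq:coefficient}. For this I would use Rodrigues' identity for the Gaussian derivatives, which gives
$$
\phi_{k_1,k_2}(x) = D_{x_1}^{k_1} D_{x_2}^{k_2} \Phi_{\lambda}(x) = (-1)^{k_1+k_2} H_{k_1,k_2}(x;\lambda) \Phi_{\lambda}(x),
$$
where $H_{k_1,k_2}(\cdot;\lambda)$ is the Hermite polynomial introduced via the generating function \eqref{eq:Hermitepoly}. Substituting this into $(f,\phi_{k_1,k_2})_{\lambda}$ cancels the weight $\Phi_{\lambda}^{-1}$ against one factor of $\Phi_{\lambda}$ and produces $\int H_{k_1,k_2}(x;\lambda) f(x) dx$ up to a sign. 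A similar substitution in $(\phi_{k_1,k_2},\phi_{k_1,k_2})_{\lambda}$ together with the rescaling $u = x/\lambda$ and the orthogonality relation for standard Hermite polynomials yields the normalization $\lambda^{-2(k_1+k_2)} 2^{k_1+k_2} (k_1!)(k_2!)$. Taking the ratio reproduces exactly the prefactor $\frac{(-1)^{k_1+k_2} \lambda^{2(k_1+k_2)}}{2^{k_1+k_2}(k_1!)(k_2!)}$ appearing in \eqref{eq:coefficient}.

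There is really no hard step here, since the spectral theorem has already been applied in Corollary \ref{corr:self_adjoint}; the mild subtlety is the bookkeeping of the $\lambda$-dependent normalizations in the Hermite polynomials, in particular the rescaling $H_{n}(z;\lambda) = \lambda^{-n} H_n(z/\lambda;1)$ that is implicit in the paper's definition \eqref{eq:Hermitepoly}. Once this rescaling is tracked carefully, the match between the Hilbert space projections and the formula \eqref{eq:coefficient} is a short computation, completing the proof of convergence in $X^{\lambda}$.
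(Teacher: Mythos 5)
Your proposal is correct and takes essentially the same route as the paper: the paper also deduces the result directly from Corollary~\ref{corr:self_adjoint} (completeness of the eigenfunctions of the self-adjoint operator $\cL^{\lambda}$ in $X^{\lambda}$) together with the identification of those eigenfunctions with the $\phi_{k_1,k_2}$. Your additional verification that the abstract Fourier coefficients agree with the projection formula \eqref{eq:coefficient} is a correct piece of bookkeeping that the paper handles separately when it derives \eqref{eq:coefficient} from the Hermite orthogonality relation.
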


Thus, { the following criterion guarantees that the expansion  \eqref{eq:moment_expansion_def} 
for the vorticity converges:}
\begin{equation}\label{eq:convergence_criterion}
\int_{\real^2} \Phi_{\lambda}^{-1}(x) (\omega(x,t))^2 dx  < \infty\ .
\end{equation}
The main result of this subsection is the following theorem which proves that if
our initial vorticity distribution satisfies \eqref{eq:convergence_criterion} for some
$\lambda=\lambda_0$, then the solution of the vorticity equation with that initial
condition will satisfy \eqref{eq:convergence_criterion} for all time $t$ with
$\lambda = \sqrt{4 \nu t + \lambda_0^2}$ and hence as a corollary if the initial vorticity
distribution satisfies \eqref{eq:convergence_criterion} then our moment expansion
converges for all times $t$.
\begin{theorem}\label{th:converence_criterion}  Define the weighted enstrophy function
$$
\cE(t) = \int_{\real^2} \Phi_{\lambda}^{-1}(x) (\omega(x,t))^2 dx \ .
$$
If the initial vorticity distribution $\omega_0$ is such that 
$\cE(0) < \infty$ for some $\lambda_0$, and $\omega_0$ is bounded
(in the $L^{\infty}$ norm) then $\cE(t)$ is finite
for all times $t > 0$.
\end{theorem}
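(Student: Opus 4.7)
The plan is to differentiate $\cE(t)$ in time, use the vorticity equation together with integration by parts to extract a differential inequality $\cE'(t) \le K \cE(t)$ with $K$ a constant, and then conclude by Gr\"onwall. Starting from $\partial_t \omega = \nu \Delta \omega - \uu \cdot \nabla \omega$, incompressibility $\nabla \cdot \uu = 0$ lets us rewrite the convective contribution as $\int \omega^2 \uu \cdot \nabla \Phi_\lambda^{-1} dx$, while two integrations by parts in the Laplacian term give
$$\cE'(t) = \inttwo \bigl[\partial_t \Phi_\lambda^{-1} + \nu \Delta \Phi_\lambda^{-1}\bigr] \omega^2 \, dx + \inttwo \omega^2 \, \uu \cdot \nabla \Phi_\lambda^{-1} \, dx - 2\nu \inttwo \Phi_\lambda^{-1} |\nabla \omega|^2 \, dx .$$
A short direct computation from $\Phi_\lambda^{-1}(x) = \pi \lambda^2 e^{|x|^2/\lambda^2}$, $\lambda^2 = \lambda_0^2 + 4\nu t$, and property (i) yields the clean identity $\partial_t \Phi_\lambda^{-1} + \nu \Delta \Phi_\lambda^{-1} = (8\nu/\lambda^2) \Phi_\lambda^{-1}$.

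Next I would expose additional positive contributions hiding in the diffusive term via the Leibniz identity
$$\inttwo \Phi_\lambda^{-1} |\nabla \omega|^2 dx = \inttwo \bigl|\nabla (\Phi_\lambda^{-1/2}\omega)\bigr|^2 dx + \inttwo \Phi_\lambda^{-1} \omega^2 \Bigl(\frac{2}{\lambda^2} + \frac{|x|^2}{\lambda^4}\Bigr) dx ,$$
which follows from expanding $|\nabla(\Phi_\lambda^{-1/2}\omega)|^2$, using $2 \Phi_\lambda^{-1/2} \nabla \Phi_\lambda^{-1/2} = \nabla \Phi_\lambda^{-1}$, and one further integration by parts against $\nabla \omega^2$. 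Writing $J(t) := \inttwo (|x|^2/\lambda^4) \Phi_\lambda^{-1} \omega^2 \, dx$ and inserting these two identities, the $(8\nu/\lambda^2)\cE$ contribution partially cancels the $(-4\nu/\lambda^2)\cE$ coming from the $2/\lambda^2$ piece, leaving
$$\cE'(t) = \frac{4\nu}{\lambda^2} \cE(t) + \inttwo \omega^2 \uu \cdot \nabla \Phi_\lambda^{-1} dx - 2\nu \inttwo \bigl|\nabla(\Phi_\lambda^{-1/2}\omega)\bigr|^2 dx - 2\nu J(t) .$$
The convective term is then controlled using $|\nabla \Phi_\lambda^{-1}| = (2|x|/\lambda^2)\Phi_\lambda^{-1}$, an $L^\infty \cdot L^1$ estimate, and Cauchy-Schwarz:
$$\Bigl|\inttwo \omega^2 \uu \cdot \nabla \Phi_\lambda^{-1} dx \Bigr| \le 2\|\uu\|_\infty \inttwo \frac{|x|}{\lambda^2} \Phi_\lambda^{-1} \omega^2 dx \le 2\|\uu\|_\infty \cE(t)^{1/2} J(t)^{1/2} .$$
Young's inequality absorbs a $\nu J$ piece into the available $-2\nu J$ term, leaving $\cE'(t) \le \bigl(4\nu/\lambda^2 + \|\uu(t)\|_\infty^2/\nu\bigr) \cE(t)$.

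To close the argument I need a uniform-in-$t$ bound on $\|\uu(t)\|_\infty$. The hypothesis $\cE(0)<\infty$ already yields $\|\omega_0\|_1 \le \sqrt{\cE(0)}$ via Cauchy-Schwarz against $\Phi_{\lambda_0}$ (since $\int \Phi_{\lambda_0} dx = 1$), and the standard maximum principle together with the $L^1$ estimate for the two-dimensional vorticity equation (Kato's inequality for the diffusion; incompressible transport preserves $L^p$ norms) give $\|\omega(t)\|_\infty \le \|\omega_0\|_\infty$ and $\|\omega(t)\|_1 \le \|\omega_0\|_1$ for all $t > 0$. The classical Biot-Savart estimate $\|\uu\|_\infty \le C \|\omega\|_1^{1/2} \|\omega\|_\infty^{1/2}$ then furnishes a time-independent constant $C_0$ with $\|\uu(t)\|_\infty \le C_0$, and together with $\lambda(t)^2 \ge \lambda_0^2$ this produces $\cE'(t) \le K \cE(t)$ for a constant $K$. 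Gr\"onwall then yields $\cE(t) \le \cE(0) e^{Kt} < \infty$. The main technical point, and the only non-algebraic step, is precisely this uniform $L^\infty$ control of the velocity; the rest of the argument is a calculation that exploits only the fact that $\Phi_\lambda$ solves the heat equation and that Gaussian weights interact favorably with weighted $L^2$ energy estimates.
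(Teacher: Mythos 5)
Your proposal is correct and follows essentially the same route as the paper's proof: differentiate $\cE(t)$, integrate by parts in the diffusive term, control the convective term via the time-uniform bound $\|\uu(t)\|_{L^\infty} \le C(\omega_0)$ obtained from the Biot--Savart estimate together with the maximum principle, absorb the dangerous pieces with Young's inequality into the negative $|x|^2$-weighted and gradient terms, and close with Gr\"onwall. The only differences are bookkeeping: you move derivatives onto the weight (using $\partial_t \Phi_\lambda^{-1} + \nu\Delta\Phi_\lambda^{-1} = (8\nu/\lambda^2)\Phi_\lambda^{-1}$ and completing the square via $|\nabla(\Phi_\lambda^{-1/2}\omega)|^2$), whereas the paper keeps $|\nabla\omega|^2$ and cancels the cross terms directly, arriving at the same differential inequality $\cE'(t) \le \bigl(4\nu/\lambda^2 + C(\omega_0)/\nu\bigr)\cE(t)$.
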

{\bf Proof:}  The idea of the proof is to derive a differential inequality for $\cE(t)$ which
guarantees that if $\cE(0)$ is finite then $\cE(t)$ will be finite for all $t$.
Differentiating $\cE(t)$ we obtain
\begin{eqnarray} \label{eq:dEone}
&& \frac{d \cE}{dt}(t) = \frac{4 \nu}{\lambda^2} \cE(t) - \frac{4 \nu}{\lambda^4}
\inttwo |x|^2  \Phi_{\lambda}^{-1}(x) (\omega(x,t))^2 dx \\ \nonumber && \qquad \qquad \quad  + 
2 \inttwo \Phi_{\lambda}^{-1}(x) \omega(x,t) \partial_t \omega(x,t) dx \\ \label{eq:dEtwo}
 && \qquad
= \frac{4 \nu}{\lambda^2} \cE(t) - \frac{4 \nu}{\lambda^4}
\inttwo |x|^2  \Phi_{\lambda}^{-1}(x) (\omega(x,t))^2 dx \\ \nonumber && \qquad \qquad  \quad + 
2 \inttwo \Phi_{\lambda}^{-1}(x) \omega(x,t) \left( \nu \Delta \omega - \uu \cdot \nabla \omega\right) dx
\end{eqnarray}

We now consider the last term in \eqref{eq:dEtwo}.   First note that upon integration by
parts we have:
\begin{equation}\label{eq:dEthree}
2 \inttwo \Phi_{\lambda}^{-1}(x) \omega(x,t) \left( \nu \Delta \omega(x,t) \right) dx
= -  2 \nu \inttwo \Phi_{\lambda}^{-1}(x) \left( | \nabla \omega|^2 +  \frac{2}{\lambda^2} \omega
x \cdot \nabla \omega \right) dx\ .
\end{equation} 
The right hand side of the expression in \eqref{eq:dEthree} can again be broken
up into two pieces and the second can be bounded by:
\begin{equation}\label{eq:dEfour}
2 \nu \inttwo \Phi_{\lambda}^{-1}(x)\left( \frac{2}{\lambda^2} \omega
x \cdot \nabla \omega \right) dx \le  \nu \inttwo \Phi_{\lambda}^{-1}(x) |\nabla \omega |^2 dx +
\frac{4\nu }{\lambda^4}  \inttwo \Phi_{\lambda}^{-1}\left (x^2 \omega^2 \right) dx \ .
\end{equation}

{ Finally,} we bound the last term in \eqref{eq:dEone}, which comes from the nonlinear
term in the vorticity equation.  In this estimate we use the fact 
(see \cite{GallayWayne:2002}, Lemma 2.1) that
the $L^{\infty}$ norm of the velocity field $\uu$ can be bounded by a constant times
the sum
of the $L^1$ and $L^{\infty}$ norms of the vorticity field - i.e by
$C(\| \omega \|_{L^1(\real^2)} + \| \omega \|_{L^{\infty}(\real^2)} )$.  This observation,
combined with the fact that $\| \omega(\cdot,t) \|_{L^p(\real^2)} \le 
\| \omega_0 \|_{L^p(\real^2)}$, which is a consequence of the maximum principle,
implies that 
$$
\| \uu(\cdot,t) \|_{L^{\infty}(\real^2)} \le 
C (\| \omega_0 \|_{L^1(\real^2)} + \| \omega_0 \|_{L^{\infty}(\real^2)} )\ .
$$
and hence that 
\begin{eqnarray}\label{eq:dEfive}
&& 2 \inttwo \Phi_{\lambda}^{-1}(x) \omega(x,t) \left( \uu \cdot \nabla \omega\right) dx
\le 2 C(\omega_0) \inttwo \Phi_{\lambda}^{-1}(x) |\omega(x,t) | |  \nabla \omega |dx \\ \nonumber
&&\qquad \qquad \le  \frac{4 C(\omega_0)}{\nu}  \inttwo \Phi_{\lambda}^{-1}(x) (\omega(x,t) )^2 dx
+  \nu \inttwo \Phi_{\lambda}^{-1}(x) |  \nabla \omega |^2 dx 
\end{eqnarray}

If we now combine the inequalities in \eqref{eq:dEthree}, \eqref{eq:dEfour} and
\eqref{eq:dEfive}, with the expression for $\frac{d\cE}{dt}$ in \eqref{eq:dEtwo} we 
obtain:
\begin{equation}
\frac{d\cE}{dt}(t) \le  \left( \frac{4 C(\omega_0)}{\nu} +  \frac{4 \nu}{\lambda^2} \right) \cE(t)\ ,
\end{equation}
from which we see immediately that if $\cE(0)$ is bounded, $\cE(t)$ remains bounded for
all time.
\QED

\subsection{Differential equations for the moments}

Assuming that the function $\omega(z,t)$ is a solution of \eqref{eq:2DIVP},  we
can derive differential equations satisfied by the moments 
$M[i_1,k_2,t]$ in \eqref{eq:moment_expansion_def}.  Surprisingly the expressions for the
coefficients in these expansions are quite simple and explicit.

If we differentiate  \eqref{eq:moment_expansion_def} and assume that 
$\omega$ is a solution of the two-dimensional vorticity equation { we obtain}:
\begin{eqnarray}\nonumber
&& \partial_t \omega = \sum_{k_1, k_2=1}^{\infty} \frac{dM[k_1,k_2;t] }{dt}
\phi_{k_1,k_2}(x,t;\lambda)  + \sum_{k_1, k_2=1}^{\infty} 
M[k_1,k_2;t] \partial_t \phi_{k_1,k_2}(x,t;\lambda)  \\ && \quad
= \sum_{k_1, k_2=1}^{\infty} M[k_1,k_2;t] \left( \nu \Delta \phi_{k_1,k_2}(x,t;\lambda)  \right) \\ \nonumber
&& 
\qquad 
\qquad
- \left( \sum_{\ell_1, \ell_2=1}^{\infty} M[\ell_1,\ell_2;t] \VV_{\ell_1,\ell_2}(x,t;\lambda) \right)
\cdot \nabla \left(\sum_{k_1, k_2=1}^{\infty} M[k_1,k_2;t] \phi_{k_1,k_2}(x,t;\lambda)  \right)
\end{eqnarray}

From the first of the ``simple facts'' we stated about $\phi_{00}$,
we see that the last term on the first line cancels the
middle line and hence if we apply the projection operators
defined in \eqref{eq:coefficient},  we are left with the 
system of ordinary differential equations for the moments
\begin{eqnarray}\label{eq:mom1}
\frac{dM[k_1,k_2;t] }{dt} &=&  - P_{k_1,k_2} \left[  \left( \sum_{\ell_1, \ell_2=1}^{\infty} M[\ell_1,\ell_2;t] 
\VV_{\ell_1,\ell_2}(x,t;\lambda) \right)\right. \\ \nonumber
&& \qquad  \qquad 
\cdot \nabla\left.  \left(\sum_{m_1, m_2=1}^{\infty} M[m_1,m_2;t] \phi_{m_1,m_2}(x,t;\lambda)  \right) \right]
\end{eqnarray}

The somewhat surprising fact {which, in our opinion, makes the preceding} straightforward
calculations interesting is that the projection on the right hand side of \eqref{eq:mom1}
can be computed explicitly in terms of the derivatives of an relatively simple function.

We now explain how this is done.  First recall that:
\begin{equation}\label{eq:derivatives}
\phi_{m_1,m_2}(x,t;\lambda) = D_{x_1}^{m_1} D_{x_2}^{m_2} \phi_{00}(x,\lambda),\ 
\VV_{\ell_1,\ell_2}(x,t;\lambda) = D_{x_1}^{\ell_1} D_{x_2}^{\ell_2} \VV_{00}(x,\lambda)
\end{equation}
In order to avoid confusing the two sets of derivatives we will rewrite these formulas
as
\begin{eqnarray}\label{eq:derivativestoo}
\phi_{m_1,m_2}(x,t;\lambda) &=&  (D_{a_1}^{m_1} D_{a_2}^{m_2} \phi_{00}(x+a,\lambda))|_{a=0},\\
\nonumber
\VV_{\ell_1,\ell_2}(x,t;\lambda)  &=&  (D_{b_1}^{\ell_1} D_{b_2}^{\ell_2} \VV_{00}(x+b,\lambda))|_{b=0}
\end{eqnarray}
Inserting these formulas into the right hand side of \eqref{eq:mom1} and using the formula
for the projection operator $P_{k_1,k_2}$ in terms of the integration against
a Hermite polynomial we { obtain}
\begin{eqnarray} \label{eq:moment_evolution}
 && \frac{d M}{dt}[k_1,k_2,t] = - \frac{(-1)^{(k_1+k_2)} \lambda^{2 (k_1+ k_2)}}{2^{k_1+k_2} (k_1 !) (k_2 !) } \sum_{\ell_1,\ell_2} \sum_{m_1,m_2} 
  M[\ell_1,\ell_2,t]  M[m_1,m_2,t] \\ \nonumber && \qquad \qquad \qquad \times  \int_{\real^2}  H_{k_1,k_2}(\xx) 
 ( D_{x_1}^{m_1} D_{x_2}^{m_2} \VV_{00}(\xx;\lambda) ) \cdot \nabla_{\xx} ( D_{x_1}^{\ell_1} D_{x_2}^{\ell_2} \phi_{00}(\xx;\lambda) ) d\xx \\
 \nonumber && \quad = - \frac{ (-1)^{(k_1+k_2)} \lambda^{2 (k_1+ k_2)}}{2^{k_1+k_2} (k_1 !) (k_2 !) } \sum_{\ell_1,\ell_2} \sum_{m_1,m_2} 
  M[\ell_1,\ell_2,t]  M[m_1,m_2,t] \\ \nonumber && \quad \times
  D_{t_1}^{k_1} D_{t_2}^{k_2} D_{b_1}^{m_1} D_{b_2}^{m_2} D_{a_1}^{\ell_1} D_{a_2}^{\ell_2} \nabla_{a} \cdot
\left(   \int_{\real^2} e^{\frac{ (-t_1^2-t_2^2 + 2 t_1 x_1 + 2t_2 x_2)}{\lambda^2}} \VV_{00}(x+b;\lambda) 
\phi_{00}(x+a;\lambda) d\xx \right) |_{t=0,a=0,b=0} \ .
\end{eqnarray}
The last equality in this expression {results from rewriting the} Hermite polynomial
$H_{l_1,k_2}$ in terms of its generating function.

The last step in deriving the equations for the moments is to evaluate
the integral in the last line of \eqref{eq:moment_evolution}.  The key step in this
evaluation is to recall that for these incompressible flows the velocity field can be written
in terms of the derivatives of the stream function and that the Laplacian of the 
stream function is minus the vorticity.  Thus, we can write:
\begin{equation}\label{eq:stream}
\VV_{00}(x+b;\lambda) = - \nabla_{b}^* (\Delta_b)^{-1} \phi_{00}(x+b)\ ,
\end{equation}
where $\nabla_{b}^* f = (\partial_{x_2} f,-\partial_{x_1} f)$.

Inserting this into the integral in \eqref{eq:moment_evolution} we find
\begin{eqnarray}\label{eq:integral_kernel}
 &&  \int_{\real^2} e^{\frac{ (-t_1^2-t_2^2 + 2 t_1 x_1 + 2t_2 x_2)}{\lambda^2}} \VV_{00}(x+b;\lambda) 
\phi_{00}(x+a;\lambda) d\xx \\ \nonumber && \qquad \qquad
= - \nabla_b^* (\Delta_b)^{-1}   \int_{\real^2} e^{\frac{ (-t_1^2-t_2^2 + 2 t_1 x_1 + 2t_2 x_2)}{\lambda^2}} \phi_{00}(x+b;\lambda) 
\phi_{00}(x+a;\lambda) d\xx
\end{eqnarray}
Now note that all three factors in the integrand are Gaussians and thus the integral can be
evaluated { explicitly,} and we find
\begin{eqnarray}
&&  \int_{\real^2} e^{\frac{ (-t_1^2-t_2^2 + 2 t_1 x_1 + 2t_2 x_2)}{\lambda^2}} \phi_{00}(x+b;\lambda) 
\phi_{00}(x+a;\lambda) d\xx \\ \nonumber && \qquad \qquad
= \frac{1}{2 \pi \lambda^2} e^{-\frac{1}{2 \lambda^2}\left( a_1^2+a_2^2-2 a_1b_1+b_1^2-2 a_2b_2+b_2^2 + 2 a_1 t_1 + 2 b_1 t_1 + t_1^2  + 2 a_2 t_2 + 2 b_2 t_2 +  t_2^2 \right)}
\end{eqnarray}

We next compute the expression
\begin{eqnarray}\label{eq:kernel_velocity}
&&- \nabla_b^* (\Delta_b)^{-1}  \frac{1}{2 \pi \lambda^2} 
e^{-\frac{1}{2 \lambda^2}\left( a_1^2+a_2^2-2 a_1b_1+b_1^2-2 a_2b_2+b_2^2 
+ 2 a_1 t_1 + 2 b_1 t_1 + t_1^2  + 2 a_2 t_2 + 2 b_2 t_2 +  t_2^2 \right)} \\ \nonumber && \qquad \quad
\end{eqnarray}
Recall that given a vorticity field $\omega$, $-(\Delta)^{-1} \omega$ is the associated
stream function and $-\nabla (\Delta)^{-1} \omega$ the velocity field associated with
$\omega$.  
Since the inverse Laplacian and derivatives in \eqref{eq:kernel_velocity} act only on the
$b$-dependent parts of the expression we need to evaluate
\begin{eqnarray}
&& - \nabla_b^* (\Delta_b)^{-1}  \frac{1}{2 \pi \lambda^2} 
e^{-\frac{1}{2 \lambda^2}\left( -2 a_1b_1+b_1^2-2 a_2b_2+b_2^2 
+  2 b_1 t_1 + 2 b_2 t_2  \right)} \\ \nonumber && \qquad =
- e^{ \frac{1}{2 \lambda^2}\left( (t_1-a_1)^2+(t_2-a_2)^2 \right)} 
\nabla_b^* (\Delta_b)^{-1}  \frac{1}{2 \pi \lambda^2} 
e^{-\frac{1}{2 \lambda^2}\left(  (b_1 + (t_1-a_1))^2 + (b_2 + (t_2-a_2) )^2 \right)}
\end{eqnarray}
But
$$
- \nabla_b^* (\Delta_b)^{-1}  \frac{1}{ \pi \lambda^2} 
e^{-\frac{1}{2 \lambda^2}\left(  (b_1 + (t_1-a_1))^2 + (b_2 + (t_2-a_2) )^2 \right)}
$$
is just the velocity field associated with a Gaussian vorticity distribution (i.e.
an Oseen vortex) centered at the point $- ( (t_1-a_1),(t_2-a_2) )$ which we know explicitly.
Hence, the expression on the right hand side of \eqref{eq:integral_kernel} has the explicit
representation:
\begin{eqnarray}\label{eq:integral_kernel_redux}
&& \frac{1}{2 \pi } e^{-\frac{1}{2 \lambda^2}\left( a_1^2 +a_2^2 +t_1^2+t_2^2 \right)}
 e^{ \frac{1}{2 \lambda^2}\left( (t_1-a_1)^2+(t_2-a_2)^2 \right)} \\ \nonumber && \qquad \times
\frac{ ( -(b_2  + (t_2-a_2) ,  (b_1 + (t_1-a_1) )}{\left(  (b_1 + (t_1-a_1))^2 + (b_2 + (t_2-a_2) )^2 \right)}
\left( 1 - e^{{-\frac{1}{2 \lambda^2}\left(  (b_1 + (t_1-a_1))^2 + (b_2 + (t_2-a_2) )^2 \right)}}
\right)
\end{eqnarray}
If we now return to \eqref{eq:moment_evolution} we see that in order to compute the coefficients
in the moment equations we need to evaluate the divergence of this last expression
with respect to $a$ which gives
\begin{eqnarray}\label{eq:oneblobkernel}\nonumber
&& K(a_1,a_2,b_1,b_2,t_1,t_2;\lambda) = \\ \nonumber
&&  -\left( \frac{
      \left( {a_2}\,{t_1} - 
        {b_2}\,{t_1} + 
        \left( -{a_1} + {b_1} \right) \,
         {t_2} \right) }{\,\pi \,
      \left( {{a_1}}^2 + {{a_2}}^2 + 
        {{b_1}}^2 + {{b_2}}^2 + 
        2\,{b_1}\,{t_1} + 
        {{t_1}}^2 - 
        2\,{a_1}\,
         \left( {b_1} + {t_1} \right)  +
         2\,{b_2}\,{t_2} + 
        {{t_2}}^2 - 
        2\,{a_2}\,
         \left( {b_2} + {t_2} \right) 
        \right) \,\lambda^2} \right) \\ 
        && \left( -1 + 
        e^{\frac{{\left( -{a_1} + 
                 {b_1} + {t_1} \right) }^2 + {\left( -{a_2} + 
                 {b_2} + {t_2} \right) }^2}{2\,\lambda^2}} \right) e^{\frac{{{a_1}}^2 + {{a_2}}^2 - 
           2\,{a_1}\,{b_1} + 
           {{b_1}}^2 - 
           2\,{a_2}\,{b_2} + 
           {{b_2}}^2 + 
           2\,{a_1}\,{t_1} + 
           2\,{b_1}\,{t_1} + 
           {{t_1}}^2 + 
           2\,{a_2}\,{t_2} + 
           2\,{b_2}\,{t_2} + 
           {{t_2}}^2}{2\,\lambda^2}}
\end{eqnarray}
Returning to equation  \eqref{eq:moment_evolution}  we finally conclude that 
\begin{eqnarray} \label{eq:moment_evolution_redux}
 && \frac{d M}{dt}[k_1,k_2,t] = \\
 \nonumber && \quad = - \frac{ (-1)^{(k_1+k_2)} \lambda^{2 (k_1+ k_2)}}{2^{k_1+k_2} (k_1 !) (k_2 !) } \sum_{\ell_1,\ell_2} \sum_{m_1,m_2}  \Gamma[k_1,k_2;\ell_1,\ell_2,m_1,m_2;\lambda]
  M[\ell_1,\ell_2,t]  M[m_1,m_2,t] \end{eqnarray}
where
\begin{eqnarray}\label{eq_coefficient_def}
&& \Gamma[k_1,k_2;\ell_1,\ell_2,m_1,m_2;\lambda] = \\ \nonumber
 && \qquad =  D_{t_1}^{k_1} D_{t_2}^{k_2} D_{b_1}^{m_1} D_{b_2}^{m_2} D_{a_1}^{\ell_1} D_{a_2}^{\ell_2} K(a_1,a_2,b_1,b_2,t_1,t_2;\lambda)|_{t=0,a=0,b=0}
\end{eqnarray}
Thus, we have succeeded in rewriting the two-dimensional vorticity equation as
a system of ordinary differential equations with simple, quadratic nonlinear terms
whose coefficients can be evaluated in terms of derivatives of a single explicit
function.  Furthermore, we have given a sufficient condition on the initial vorticity distribution
to guarantee that the expansion of the vorticity generated by the solution of these
ordinary differential equations converges for all time.

\section{The moment expansion for several vortex centers}\label{sec:four}

In this section we extend the Hermite moment expansion of the previous section to 
the case in which there are two or more centers of vorticity by combining this
expansion with the multi-vortex representation of Section \ref{sec:two}.
For simplicity of exposition we limit the discussion here to the case of two vortices but
the expansion can be extended to any finite number of vortices.

The basic idea is just to consider the equations \eqref{eq:omegaj} for the evolution
of each vortex and then expand each of the functions $\omega^j$ in Hermite
moments as in the previous section.   Thus, we define
\begin{equation}\label{eq:moment_expansion_def_tw0}
\omega^j(z,t) = \sum_{k_1, k_2=1}^{\infty} M^j[k_1,k_2;t] \phi_{k_1,k_2}(z,t;\lambda) 
\end{equation}
for $j=1,2$.
We make a similar expansion for the velocity field in terms of the functions
$\VV_{\ell_1,\ell_2}$, and insert the expansions into \eqref{eq:omegaj}.  Letting
$z = x-x^j(t)$, and recalling that  $\partial_t \phi_{k_1,k_1} = \nu
\Delta \phi_{k_1,k_2}$  we obtain:
\begin{eqnarray}\label{eq:mom_multi}
&& \frac{dM^j [k_1,k_2;t] }{dt} = \\ \nonumber && 
\qquad = - P_{k_1,k_2} \left[  \left( \sum_{j^{\prime}=1}^2
\sum_{\ell_1, \ell_2=1}^{\infty} M^{j^{\prime}} [\ell_1,\ell_2;t] 
\VV_{\ell_1,\ell_2}(z-s_{j,j^{\prime}},t;\lambda) \right)
 \right.  \\ \nonumber &&
\qquad \qquad \qquad \qquad \left. \cdot \nabla \left(\sum_{k_1, k_2=1}^{\infty} M^j[m_1,m_2;t] \phi_{m_1,m_2}(z,t;\lambda)  \right) \right]
\end{eqnarray}
where $s_{j,j^{\prime}} = x^{j^{\prime}}(t) - x^j(t)$.
{Proceeding as in the previous section and using equation  \eqref{eq:moment_evolution}  we finally conclude that }
\begin{eqnarray} \label{eq:moment_evolution_multi}
 && \frac{d M^j}{dt}[k_1,k_2,t] = \\
 \nonumber && \quad = - \frac{ (-1)^{(k_1+k_2)} \lambda^{2 (k_1+ k_2)}}{2^{k_1+k_2} (k_1 !) (k_2 !) } \sum_{j^{\prime}=1}^2 \sum_{\ell_1,\ell_2} \sum_{m_1,m_2}  \Gamma^{j,j^{\prime}}[k_1,k_2;\ell_1,\ell_2,m_1,m_2;s_{j,j^{\prime}},\lambda] \\ \nonumber
 && \qquad \qquad  \qquad 
 \times  M^j[\ell_1,\ell_2,t]  M^{j^{\prime}}[m_1,m_2,t] 
 \end{eqnarray}
where
\begin{eqnarray}\label{eq:coefficient_def_multi}
&& \Gamma^{j,j^{\prime}}[k_1,k_2;\ell_1,\ell_2,m_1,m_2;s_{j,j^{\prime}},\lambda] = \\ \nonumber
 && \qquad =  D_{t_1}^{k_1} D_{t_2}^{k_2} D_{b_1}^{m_1} D_{b_2}^{m_2} D_{a_1}^{\ell_1} D_{a_2}^{\ell_2} K^{multi}(a_1,a_2,b_1,b_2,t_1,t_2;s_{j,j^{\prime}},\lambda)|_{t=0,a=0,b=0}
\end{eqnarray}
and 
\begin{eqnarray}\label{eq:K_multi_def}
&&  K^{multi}(a_1,a_2,b_1,b_2,t_1,t_2;s_1,s_2 ,\lambda) = \\ \nonumber
&& \qquad \qquad 
\nabla_{a} \cdot
\left(   \int_{\real^2} e^{\frac{ (-t_1^2-t_2^2 + 2 t_1 x_1 + 2t_2 x_2)}{\lambda^2}} \VV_{00}(x-s+b;\lambda) 
\phi_{00}(x+a;\lambda) d\xx \right)  
\end{eqnarray}

\begin{remark}
Note that comparing \eqref{eq:coefficient_def_multi} with
the calculation leading \eqref{eq:oneblobkernel} we see that 
$K^{multi}$ can be written in terms of the expression for $K$ via the
simple formula:
$$
K^{multi}(a_1,a_2,b_1,b_2,t_1,t_2;s_1,s_2 ,\lambda) =K(a_1,a_2,b_1-s_1,b_2-s_2,t_1,t_2;\lambda) 
$$
\end{remark}

{
We now derive a similar expansion for the evolution of the centers of each vortex.  We begin
with \eqref{eq:xjdot}
$$
\frac{d x^j}{dt}(t) = \frac{1}{m_j} \sum_{ j^{\prime}=1}^N \inttwo  \left(
\uu^{\ell} (z + x^j(t) -x^{j^{\prime}}(t),t) \omega^j(z,t)  \right) dz\ .
$$
(Recall that this is really a pair of equations, one for each component of $x^j$.)  Now insert
the moment expansion of $\uu^{j^{\prime}}$ and $\omega^j$ into this expression and we
obtain:
\begin{equation}
\frac{d x^j}{dt}(t) = \sum_{j^{\prime}=1}^N \sum_{\ell_1,\ell_2} \sum_{m_1,m_2} 
  M^j[\ell_1,\ell_2,t]  M^{j^{\prime}}[m_1,m_2,t] 
  \int_{\real} \VV_{\ell_1,\ell_2}(z-s_{j,j^{\prime}},t) \phi_{k_1,k_2}(z,t) dz\ ,
\end{equation}
where as before $s_{j,j^{\prime}}= x^{j^{\prime} }(t)-x^j(t)$.
The integral of the velocity and vorticity can be evaluated just as in the preceding section
and we find
 \begin{equation}\label{eq:xdotmoment}
 \frac{d x^j}{dt}(t) =
\sum_{j^{\prime}=1}^N \sum_{\ell_1,\ell_2} \sum_{m_1,m_2}  \Xi^{j,j^{\prime}}[\ell_1,\ell_2,m_1,m_2;s_{j,j^{\prime}},\lambda]
M^j[\ell_1,\ell_2,t]  M^{j^{\prime}}[m_1,m_2,t] \ ,
 \end{equation}
 where in this case the coefficients $\Xi^{j,j^{\prime}}[k_1,k_2;\ell_1,\ell_2,m_1,m_2;s_{j,j^{\prime}},\lambda]$ is given by the expression
 \begin{eqnarray}\label{eq:xdotcoefficient}
&& \Xi^{j,j^{\prime}}[\ell_1,\ell_2,m_1,m_2;s,\lambda]  \\ \nonumber && \qquad =
 \frac{1}{2\pi}  D_{b_1}^{m_1} D_{b_2}^{m_2} D_{a_1}^{\ell_1} D_{a_2}^{\ell_2} \left(
\big( \frac{ (-(b_2-s_2-a_2),(b_1-s_1-a-1)) }{(b_2-s_2-a_2)^2+(b_1-s_1-a-1)^2 } \big) \right. \\ \nonumber
&& \qquad \qquad 
\left. \left( 1- e^{-\frac{1}{2\lambda^2} ((b_2-s_2-a_2)^2+(b_1-s_1-a-1)^2 )} \right) \right))|_{a=0,b=0}\ .
\end{eqnarray}

}

{
\subsection{Convergence of Multi-Vortex Expansion}

We note that it is easy to extend our previous result on the
convergence of the moment expansion
 to the multi-vortex expansion.  To do so, we  change variables to re-center each 
 vortex at the origin. Thus if we let 
$$\omega^j(x-x^j(t),t) = w^j(x,t), \hspace{.5cm}\uu^j(x-x^j(t),t) = \vv^j(x,t)$$
{ then  equation {(\ref{eq:omegaj})} becomes}

\begin{eqnarray}\label{eq:omegaNew}\nonumber
\frac{\partial w^j}{\partial t}(x,t) &=& \nu \Delta w^j(x,t) 
-  \left(\sum_{\ell=1}^N 
\vv^{\ell}(x-x^{\ell}(t)+x^{j}(t),t) \right) \cdot \nabla w^j(x,t),
\end{eqnarray}
for $j=1, \dots , N$.

We are now ready to state our result.

\begin{theorem}
Define
\begin{equation*} 
\mathcal{E}^j(t) = \int_{\mathcal{R}^2} G^{-1}_\lambda(x)w^j(x,t)^2dx.
\end{equation*}
If the initial vorticity distribution $\omega_0 \equiv \sum_{\ell=1}^N \omega^\ell(x-x^\ell(0),0)$ is such that $\mathcal{E}^j(0) < \infty$ for some $\lambda_0$,
and for all $j=1,2, \dots, N$, and if 
$\omega_0$ is bounded (in the $L^\infty$ norm) then each $\mathcal{E}^j(t)$ is finite for all times $t>0$.
\end{theorem}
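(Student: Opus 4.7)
The plan is to adapt the single-vortex argument of Theorem~\ref{th:converence_criterion} componentwise to each $w^j$, producing a linear differential inequality of the form $d\mathcal{E}^j/dt \le C \mathcal{E}^j(t)$ with $C$ independent of time, from which Gr\"onwall's inequality yields the conclusion. The recentered equation for $w^j$ can be written as
\begin{equation*}
\partial_t w^j(x,t) = \nu \Delta w^j(x,t) - V^j(x,t) \cdot \nabla w^j(x,t),
\qquad
V^j(x,t) := \sum_{\ell=1}^N \vv^{\ell}\bigl(x - x^{\ell}(t) + x^j(t),\,t\bigr),
\end{equation*}
so each $w^j$ obeys a transport-diffusion equation driven by an \emph{external} (i.e.\ $x$-independent-of-$w^j$) velocity field $V^j$. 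Crucially, each $\vv^\ell$ is a Biot-Savart velocity and therefore divergence-free in $x$, so $V^j$ is divergence-free as well. With this observation, the calculation that produced \eqref{eq:dEone}--\eqref{eq:dEthree} goes through verbatim with $w^j$ in place of $\omega$ and $V^j$ in place of $\uu$, yielding exactly the same linear term $(4\nu/\lambda^2)\mathcal{E}^j$, the same negative $-(4\nu/\lambda^4)\int |x|^2\Phi_\lambda^{-1}(w^j)^2\,dx$, and the same diffusive pieces bounded via \eqref{eq:dEfour}.

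The main obstacle, and the only place where multi-vortex considerations really enter, is obtaining a time-uniform $L^\infty$ bound on $V^j$ to replicate \eqref{eq:dEfive}. In the single-vortex case this was immediate from the standard 2D maximum principle applied to $\omega$ itself; here each $\omega^j$ satisfies the modified equation \eqref{eq:omegaj} rather than the vorticity equation, so the bound must be re-established. However, since the advecting field in the $w^j$-equation is divergence-free, the usual energy argument (multiply by $|w^j|^{p-2}w^j$ and integrate; the transport contribution becomes $-\int(\nabla\cdot V^j)|w^j|^p/p\,dx = 0$, the diffusion contribution is non-positive) shows that $\|w^j(\cdot,t)\|_{L^p(\real^2)} \le \|w^j(\cdot,0)\|_{L^p(\real^2)}$ for every $p \in [1,\infty]$, the endpoint $p=1$ being obtained by approximating $|s|$ by $\sqrt{s^2+\epsilon^2}$ and passing to the limit. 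Combining this with Lemma~2.1 of \cite{GallayWayne:2002} as quoted in the single-vortex proof gives
\begin{equation*}
\|V^j(\cdot,t)\|_{L^\infty(\real^2)} \le \sum_{\ell=1}^N \|\vv^{\ell}(\cdot,t)\|_{L^\infty(\real^2)} \le C\sum_{\ell=1}^N \bigl(\|w^\ell(\cdot,0)\|_{L^1} + \|w^\ell(\cdot,0)\|_{L^\infty}\bigr) =: C_0,
\end{equation*}
with $C_0$ depending only on the initial data (using the boundedness hypothesis on $\omega_0$).

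With $C_0$ in hand, the nonlinear estimate in \eqref{eq:dEfive} is reproduced with $C_0$ in place of $C(\omega_0)$, and assembling the pieces exactly as in the single-vortex proof gives
\begin{equation*}
\frac{d\mathcal{E}^j}{dt}(t) \le \left(\frac{4 C_0}{\nu} + \frac{4\nu}{\lambda^2}\right) \mathcal{E}^j(t).
\end{equation*}
Since the coefficient on the right is locally bounded in $t$ (indeed $4\nu/\lambda^2 = 4\nu/(\lambda_0^2+4\nu t)$ is decreasing), Gr\"onwall's inequality implies $\mathcal{E}^j(t) < \infty$ for every $t>0$ whenever $\mathcal{E}^j(0) < \infty$, completing the proof for each $j=1,\ldots,N$. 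I expect the write-up to be short since everything except the divergence-free $L^p$ estimate for the externally-advected component equations is a straightforward transcription of Theorem~\ref{th:converence_criterion}.
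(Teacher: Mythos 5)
Your proposal is correct in outline and shares the paper's skeleton — differentiate $\mathcal{E}^j$, reproduce the linear and diffusive terms of \eqref{eq:dEone}--\eqref{eq:dEfour} verbatim, and close with a Gr\"onwall inequality — but it diverges from the paper at the one step where the multi-vortex structure actually matters, namely the $L^\infty$ bound on the advecting field $V^j=\sum_{\ell}\vv^{\ell}(x-x^{\ell}(t)+x^{j}(t),t)$. The paper's route is to observe that this sum is nothing but the \emph{full} velocity field $\uu$ solving \eqref{fluid1} (translated by $x^j(t)$), whose vorticity $\omega$ obeys the genuine two-dimensional vorticity equation; the standard maximum principle for $\omega$ then gives $\|\uu\|_{L^\infty}\le C(\|\omega_0\|_{L^1}+\|\omega_0\|_{L^\infty})$ exactly as in Theorem~\ref{th:converence_criterion}, and the hypothesis on the \emph{sum} $\omega_0$ is precisely what is needed. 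The paper explicitly remarks that it is unknown whether the individual components $w^j$ satisfy a maximum principle, and sidesteps the question. You instead attack the components directly: since each $\vv^{\ell}$ is a Biot--Savart field, $V^j$ is divergence-free, and the $|w^j|^{p-2}w^j$ energy argument gives $\|w^{\ell}(\cdot,t)\|_{L^p}\le\|w^{\ell}(\cdot,0)\|_{L^p}$ for each component separately, after which you bound each $\vv^{\ell}$ individually via Lemma 2.1 of \cite{GallayWayne:2002}. This is a legitimate alternative (and in fact answers the concern the paper raises), but note two costs: (i) it requires $\|w^{\ell}(\cdot,0)\|_{L^\infty}<\infty$ for \emph{each} component, which the stated hypothesis — boundedness of the sum $\omega_0$ together with $\mathcal{E}^j(0)<\infty$ — does not strictly supply (cancellation between components is not excluded, and finiteness of the weighted enstrophy gives $L^1\cap L^2$ control of $w^j_0$ but not $L^\infty$); and (ii) it does more work than necessary, since the component-wise $L^p$ estimates are never needed once one notices that only the sum $V^j=\uu$ enters the inequality \eqref{eq:dEfive}. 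If you add the (physically natural) assumption that each $\omega^{\ell}_0$ is individually bounded, your argument is complete.
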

\begin{proof}
{
We use the same idea as the single vortex case and differentiate $\mathcal{E}^j(t)$:
\begin{eqnarray*}
\frac{d\mathcal{E}^i}{dt}(t) & = & \frac{4\nu}{\lambda^2}\mathcal{E}^i(t)
 -\frac{4\nu}{\lambda^4} \int_{\mathbb{R}^2}|x|^2 G^{-1}_\lambda(x)w^i(x)^2dx \\
&   & + 2 \int_{\mathbb{R}^2} G^{-1}_\lambda(x)w^i(x,t)(\partial_t w^i(x,t)dx\\
& = & \frac{4\nu}{\lambda^2}\mathcal{E}^i(t)
 -\frac{4\nu}{\lambda^4} \int_{\mathbb{R}^2}|x|^2 G^{-1}_\lambda(x)(w^i(x)^2dx \\
&   & + 2 \int_{\mathbb{R}^2} G^{-1}_\lambda w^i(\nu\Delta w^i-
\left(\sum_{\ell=1}^N 
\vv^{\ell}(x-x^{\ell}(t)+x^{j}(t),t) \right)\cdot\nabla w^i)  dx.
\end{eqnarray*}}
From here the proof proceeds identically to the proof of theorem \ref{prop:convergence_criterion} except we must examine the nonlinear term that comes from the vorticity equation a bit closer. In general, it is unknown whether $w^j(\cdot,t)$ satisfies a maximum principle, but since$ \left(\sum_{\ell=1}^N 
\vv^{\ell}(x-x^{\ell}(t)+x^{j}(t),t) \right)= \uu$, { the solution to (\ref{fluid1}),} 
it does satisfy a maximum principle.   Hence, as in Subsection \ref{subsec:onevortexconvergence}
we can bound the $L^{\infty}$ norm of $\sum_{\ell=1}^N 
\vv^{\ell}(x-x^{\ell}(t)+x^{j}(t),t) $ by a constant depending only on the {\em initial}
vorticity distribution.
 Hence we proceed to bound the integral
\begin{eqnarray*}
 \int_{\mathbb{R}^2} G^{-1}_\lambda 
 w^i( \left(\sum_{\ell=1}^N 
\vv^{\ell}(x-x^{\ell}(t)+x^{j}(t),t) \right)\cdot\nabla w^i)  dx \le 2C(\omega_0) \int_{\mathbb{R}^2} 
G^{-1}_\lambda |w^i| |\nabla w^i| dx
\end{eqnarray*}
and thus the rest of the bounds are the same. Again, putting everything together we arrive at:
\begin{equation}
\frac{d\mathcal{E}^i}{dt}(t) \le \left(\frac{4\nu}{\lambda^2}+\frac{4C(\omega_0)}{\nu}\right)
\mathcal{E}^i(t) 
\end{equation}
\end{proof}

}

\subsection{Interaction of Gaussian Vortices}\label{sec:Kirchoff_generalization}

The experimental and numerical work of \cite{Meunier:2005} has shown that widely separated
regions of vorticity can be well approximated by { Gaussians} for long periods of time.
This corresponds to truncating our expansions for the vorticity so that they
contain only a single term.
In this subsection we
analyze the equations that result from this truncation.  {This approximation can be viewed as a generalization of the Helmholtz-Kirchhoff approximation in which}
we include the effects of vorticity and finite core size to lowest order.  We show that the
total vorticity of each of the vortices is constant while the centers of vorticity
evolve either along straight lines or circles.  

{
\begin{remark} It should be noted here that the effect of truncating our expansion after one term allows for only viscosity as the driving force in vortex merger. Allowing more terms in the expansion introduces convective forces.
\end{remark}
}

If we start with two {Gaussian distributions for our initial} vorticity with the same value
of $\lambda_0$ , and
truncate the equations of motion for the moments so that all terms containing higher order
moments are omitted then we can conclude that the two vortices  travel along circular or straight line orbits around the ``center of vorticity."  Moreover, the leading coefficients $M^j[0,0:t]$ of the expansions are constant.  To be precise, we let
\begin{eqnarray}
\omega^1(x,t) &=&  M^1[0,0;t]\phi_{0,0}(x-x^1(t),t;\lambda)\\
\omega^2(x,t) &=& M^2[0,0;t]\phi_{0,0}(x-x^2(t),t;\lambda).
\end{eqnarray}
Let us also write $s_{i,j}=x^j(t)-x^i(t)$, then using { (\ref{eq:mom_multi})- (\ref{eq:K_multi_def})} we first  calculate the evolution of $M^1(0,0:t)$


 { 
\begin{eqnarray*}
\frac{dM^1}{dt}[0,0;t] & = & -M^1[0,0;t]^2K(0,0,0,0,0,0,\lambda) \\ &  &  - M^1[0,0;t]M^2[0,0;t]K^{multi}(0,0,0,0,0,0,s_{1,2},\lambda)  \\ & = & 0,
\end{eqnarray*}
since $K(0,0,0,0,0,0,\lambda)=0$ and $K^{multi}(0,0,0,0,0,0,\lambda)=0$, respectively. The calculation for $M^2[0,0;t]$ is the same.
}
Thus the evolutions of the coefficients of leading order are constant. 

\begin{remark} The fact that $M^1[0,0;t]$ is constant in time is not a consequence of the truncation
of the moment equations to first order.  One can show that the equations \eqref{eq:allomega}
conserve the zeroth moment of $\omega^j$, independent of any truncation.
\end{remark}

More interestingly, though, is the calculation for the evolution of $x^j(t)$, the centers of these vortices. Again if we denote $x^j = (x^j_1,x^j_2)$ then the evolution for each component as defined by equation (\ref{eq:xjdot}) can be written as:
\begin{equation}\label{vfld2vrtx}
\frac{dx^j_i}{dt}  = \frac{1}{M^j[0,0,t]} \int V_i^k(y-s_{1,2},t)\omega^j(y,t)dy.
\end{equation}
Now using equation {(\ref{eq:stream})} to evaluate
$\\V^k(j-s_{1,2},t)$, equation (\ref{vfld2vrtx}) yields the following equations for 
each $x^j_i$, $j=1,2$ and $i=1,2$:

\begin{equation}\label{FullSys}
\begin{array}{ccc}
\dot{x}^1_1 & = & -\frac{M^2}{2\pi} \frac{(e^{-\frac{(x^1_1 - x^2_1)^2+(x^1_2 - x^2_2)^2}{2\lambda(t)^2}}-1)(x^1_2 - x^2_2)}{(x^1_1 - x^2_1)^2+(x^1_2 - x^2_2)^2} \vspace{.25cm}
\\ \vspace{.25cm}
\dot{x}^1_2 & = & -\frac{M^2}{2\pi} \frac{(e^{-\frac{(x^1_1 - x^2_1)^2+(x^1_2 - x^2_2)^2}{2\lambda(t)^2}}-1)(-x^1_1 + x^2_1)}{(x^1_1 - x^2_1)^2+(x^1_2 - x^2_2)^2}
\\ \vspace{.25cm}
\dot{x}^2_1 & = & \frac{M^1}{2\pi} \frac{(e^{-\frac{(x^1_1 - x^2_1)^2+(x^1_2 - x^2_2)^2}{2\lambda(t)^2}}-1)(x^1_2 - x^2_2)}{(x^1_1 - x^2_1)^2+(x^1_2 - x^2_2)^2}
\\ \vspace{.25cm}
\dot{x}^2_2 & = & \frac{M^1}{2\pi} \frac{(e^{-\frac{(x^1_1 - x^2_1)^2+(x^1_2 - x^2_2)^2}{2\lambda(t)^2}}-1)(-x^1_1 + x^2_1)}{(x^1_1 - x^2_1)^2+(x^1_2 - x^2_2)^2}
\end{array}
\end{equation}
where $M^{j} \equiv M^j[0,0,t]$ is constant and represents the total vorticity of the $j^{th}$ vortex.
\begin{remark}
If the vortices have different $\lambda(t)$ values say $\lambda_1(t)$ and $\lambda_2(t)$ then one just needs to replace the $2\lambda(t)^2$ with $\lambda_1(t)^2+\lambda_2(t)^2$ in the exponential to arrive at the correct system.
\end{remark}

We {now state} the main result of this section.

\begin{theorem}
System (\ref{FullSys}) admits only circular or straight line trajectories.
\end{theorem}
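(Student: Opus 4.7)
The plan is to exploit two conservation laws hiding in the system~(\ref{FullSys}). First I would show that the intercentroid distance is a constant of motion, and second that a weighted sum of the two position vectors is constant. Together these force the trajectories to be either concentric circles or parallel lines.

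To execute step one, I would set $\Delta(t) = x^1(t) - x^2(t)$ and compute $\dot{\Delta}$ by subtracting the third and fourth equations from the first two. The resulting vector is a scalar multiple of $(\Delta_2,-\Delta_1)$, i.e. perpendicular to $\Delta$ itself, so
\begin{equation*}
\tfrac{1}{2}\tfrac{d}{dt}|\Delta|^2 = \Delta\cdot\dot{\Delta}=0,
\end{equation*}
and hence $r^2 = |\Delta|^2$ is conserved. For step two, inspection of the right-hand sides of~(\ref{FullSys}) shows that $M^2\dot{x}^1 + M^1\dot{x}^2=0$ componentwise (the two terms have opposite sign and equal magnitude). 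Equivalently $M^1\dot x^1 + M^2\dot x^2=0$ after relabeling, which says the weighted position $M^1 x^1 + M^2 x^2$ is conserved.

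With these in hand I would split into cases. If $M^1+M^2\neq 0$, define the center of vorticity $C = (M^1 x^1+M^2 x^2)/(M^1+M^2)$, which is fixed by step two. Solving for $x^1,x^2$ in terms of $C$ and $\Delta$ gives $x^1 = C + \tfrac{M^2}{M^1+M^2}\Delta$ and $x^2 = C - \tfrac{M^1}{M^1+M^2}\Delta$. Since $|\Delta|$ is constant by step one, each of $x^1,x^2$ lies on a circle centered at $C$ with fixed radius; the equation $\dot\Delta = \alpha(t)\,J\Delta$ (with $J$ a $90^\circ$ rotation and $\alpha(t)$ a time-dependent scalar coming from $\lambda(t)$) shows $\Delta$ genuinely rotates, so the trajectories are full circular arcs. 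If instead $M^1+M^2 = 0$, the four equations of~(\ref{FullSys}) collapse to $\dot x^1=\dot x^2$, and since $|\Delta|$ is constant the common velocity is constant in direction and magnitude, giving parallel straight-line trajectories.

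The argument is essentially algebraic once the two conservation laws are identified, so I do not anticipate a serious obstacle. The only subtle point is that the angular speed $\alpha(t)$ is time-dependent through $\lambda(t) = \sqrt{\lambda_0^2 + 4\nu t}$; I would emphasize that this affects only the \emph{rate} at which the circle is traversed, not the geometric fact that the orbit is a circle, since conservation of $|\Delta|$ is independent of the time dependence of $\lambda$.
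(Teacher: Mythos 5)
Your argument is correct and rests on the same two invariants that the paper's proof ultimately uses, but you derive them by a genuinely different route. The paper eliminates time from (\ref{FullSys}) to get the phase-plane relations (\ref{FullSys2}), integrates the two linear ones to obtain $x^1 = -\frac{M^2}{M^1}(x^2+k)$ (which is precisely conservation of $M^1x^1+M^2x^2$), substitutes back, and integrates once more to land on the equation of a circle. You instead exhibit the invariants directly: $\dot\Delta\perp\Delta$ gives conservation of $|x^1-x^2|$, the cancellation $M^1\dot x^1+M^2\dot x^2=0$ gives a fixed center of vorticity, and the circle falls out of $x^1 = C+\frac{M^2}{M^1+M^2}\Delta$ with $|\Delta|$ fixed. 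Your version buys two things: it avoids the paper's implicit divisions by $\dot x^1_1$, $\dot x^2_1$ and $x^1_2-x^2_2$ (hence the ``away from rest points'' caveat there), and it makes transparent that the time dependence of $\lambda(t)$ only reparametrizes the orbit. Two small slips to repair: the weighted sum you first write, $M^2\dot x^1+M^1\dot x^2$, is \emph{not} zero in general --- its first component is $\bigl((M^1)^2-(M^2)^2\bigr)F\Delta_2$ with $F$ the common scalar factor --- the correct identity is $M^1\dot x^1+M^2\dot x^2=0$, which is what you actually use afterwards; and in the case $M^1+M^2=0$ the common velocity has constant \emph{direction} but not constant magnitude, since the scalar factor involves $\lambda(t)^2=\lambda_0^2+4\nu t$. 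Constancy of direction is all that is needed for the straight-line conclusion.
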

{\bf Proof:}
Away from rest points our system (\ref{FullSys}) can be transformed to:
{
\begin{equation}\label{FullSys2}
\begin{array}{ccc}
\frac{\partial x^1_2}{\partial x^1_1} & = & \frac{-x^1_1 + x^2_1}{x^1_2-x^2_2} \\
\frac{\partial x^2_2}{\partial x^2_1} & = & \frac{-x^1_1 + x^2_1}{x^1_2-x^2_2} \\
\frac{\partial x^1_2}{\partial x^2_2} & = & -\frac{M^2}{M^1}
\\
\frac{\partial x^1_1}{\partial x^2_1} & = & -\frac{M^2}{M^1}.
\end{array}
\end{equation}
}
Integrating out the bottom two equations { we get}
\begin{equation}
\begin{array}{ccc}
x^1_2 & = & -\frac{M^2}{M^1}(x^2_2 + k_2) \\
x^1_1 & = & -\frac{M^2}{M^1}(x^2_1 + k_1).
\end{array}
\end{equation}

First let us assume that $M^2 \ne -M^1$. Then plugging back into the first two equations of \eqref{FullSys2} we arrive at:
\begin{equation}
\begin{array}{ccccc}
\frac{\partial x^1_2}{\partial x^1_1} & = & \frac{(-1 -\frac{M^2}{M^1})x^1_1 -k_1}{(1 +\frac{M^2}{M^1})x^1_2 -k_2} & = & -\frac{x^1_1 -\tilde{k}_1}{x^1_2 -\tilde{k}_2} \\
\frac{\partial x^2_2}{\partial x^2_1} & = & \frac{(1 +\frac{M^2}{M^1})x^2_1 -\frac{M^2}{M^1}k_1}{(-1 -\frac{M^2}{M^1})x^1_2 -\frac{M^2}{M^1}k_2} & = & -\frac{x^1_1 -\hat{k}_1}{x^1_2 -\hat{k}_2}
\end{array}
\end{equation}
for appropriate constants $\tilde{k}_1,\tilde{k}_2,\hat{k}_1,\hat{k}_2$. Thus we integrate again and get
\begin{equation}
\begin{array}{ccc}
(x^1_2(t)-\tilde{k}_2)^2 + (x^1_1(t)-\tilde{k}_1)^2 = C_1 \\
(x^2_2(t)-\hat{k}_2)^2 + (x^2_1(t)-\hat{k}_1)^2 = C_2 .
\end{array}
\end{equation}

If $M^2 = -M^1$ then we have equal but opposite size vortices and equations (\ref{FullSys}) become:
\begin{equation}
\begin{array}{ccc}
\frac{\partial x^1_2}{\partial x^1_1} & = & -\frac{k2}{k1} \\
\frac{\partial x^2_2}{\partial x^2_1} & = & -\frac{k2}{k1} 
\end{array}
\end{equation}
which gives us straight line solutions with slope $-\frac{k2}{k1}$, as desired. 

\QED

{
If we now consider the case of equal total vorticity for the two vortices ($M^2=M^1\equiv M$), the classical point vortex result is that the vortices will rotate around the center of vorticity at a constant frequency, $\frac{2M}{2\pi D^2}$, where $D$ is the distance between the vortex centers. 
We will now compute the viscous and finite core size effects  on the frequency of rotation,
predicted by our model.  For simplicity we will center the vortices at the origin and place them on the circle of radius, $r$. We apply the polar change of variables,
\begin{equation}
x^i_1 = r\cos(\theta_i) \hspace{.5cm} x^i_2=r\sin(\theta_i) \hspace{.5cm} i=1,2
\end{equation}
then  using the fact that our vortices are out of phase by $\pi$ we can compute that
\begin{eqnarray}
(x^1_1-x^2_1) & = & r\cos(\theta_1) - r\cos(\theta_2) = 2r\cos(\theta_1) \\
(x^1_2-x^2_2) & = & r\sin(\theta_1) - r\sin(\theta_2) = 2r\sin(\theta_1),
\end{eqnarray}
and thus we arrive at the expression for the frequency of rotation
\begin{equation}\label{NewFreq}
\Omega = \frac{M}{4 \pi r^2} (1-e^{\frac{-2r^2}{\lambda(t)^2}}).
\end{equation}

Since $\lambda(t)^2 = \lambda_0^2 +4\nu t$ we notice two things: the first is that viscosity, $\nu$, slows the frequency of rotation down and, second, in the formal limit as $\nu \rightarrow 0$ and $\lambda_0^2 \rightarrow 0$ we recover the constant frequency, $\frac{2M}{2\pi D^2}$, which is what the Helmholtz-Kirchoff
model for  the rotation of two point vortices predicts.}

{With this calculation we may now compare the results of expanding a two gaussian  initial distribution as a single vortex or as two independent vortices.  In \cite{nagem:2007} a two Gaussian  initial distribution a distance $2r$ apart with core size $\lambda_0$, each with mass $M$, is approximated by a single vortex expansion using  (\ref{eq:moment_expansion_def}). The authors truncate the expansion to quadrapole moment ($n=2$) and calculated the frequency of rotation to be:
\begin{equation}\label{OldFreq}
\Omega = \frac{M}{8 \pi} \left[ 
\frac{1}{2 \nu t} 
\ln \left( 1 + \frac{4 \nu t}{r^{2}} \right) -
\frac{ \lambda_0^{2}}{r^{4}} \frac{1}{1 + 4 \nu t / r^{2}}
\right].
\end{equation}
This equation is directly comparable to (\ref{NewFreq}).  Notice that both equations  for frequency of rotation indicate  slowing of rotation over time, albeit at different rates.  In addition, both equations recover 
the Helmholtz-Kirchhoff approximation of $\frac{2M}{2\pi D^2}$  in the limit as $\nu \rightarrow 0$ and $\lambda_0 \rightarrow 0$. In fact, in the sufficiently localized regime, $r << \lambda_0$ both equations have similar initial frequencies and remain close asymptotically.  A typical example is shown in figure \ref{frequency}.

\begin{figure}\begin{center}\label{frequency}
``Typical" frequency of rotation for two localized vortices\\
\includegraphics[width=.4\textwidth]{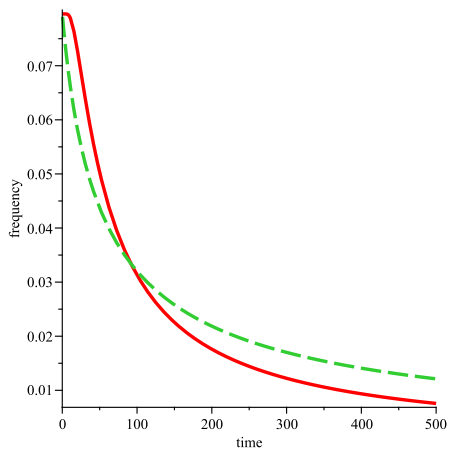}\end{center}
\caption{Here we plot both the frequency of rotation predicted by single vortex expansion up to quadrapole order (dashed) and the frequency predicted by two vortex expansions truncated to leading order (solid). The parameter values used are $\nu=.01$,$M=1$,$r=1$, and $\lambda_0 =.01$.}
\end{figure}
}

\section{Conclusions:}  In this paper we have derived a system of ordinary differential equations
whose solutions give a representation of solutions of the two dimensional vorticity equation
in terms of a system of interacting vortices.  We have also derived a sufficient condition on the
initial vorticity distribution which guarantees that this representation in terms of interacting
vortices is equivalent to the original solution of the two-dimensional vorticity equation.
This model generalizes the classical Helholtz-Kirchhoff model of interacting, inviscid,
point vortices to include the effects of both finite core size and viscosity.  We have
also looked at the analytical predictions of our model for the interaction of two vortices
which the expansion is truncated at leading order.  We plan in future work to further
explore the analytical and numerical predictions of this model.

\section{Acknowledgements:}  CEW wishes to acknowledge many useful discussions about two
dimensional fluid flows with Th.~Gallay.  DU and CEW wish also to acknowledge a very
helpful discussion of Hermite expansions with G. Van Baalen.

\bibliographystyle{plain} \bibliography{multi_reference}

\end{document}